\documentclass{amsart}

\usepackage{amsrefs}
\usepackage{amssymb}
\usepackage{amsmath}
\usepackage{amsthm}
\usepackage{amsfonts}
\usepackage{epsfig}
\usepackage{graphicx}
\usepackage{psfrag}
\usepackage{color}
\usepackage{mathrsfs}
\usepackage[utf8]{inputenc}
\usepackage[T1]{fontenc}
\usepackage[english]{babel}
\usepackage{hyperref}

\hyphenation{Mon-te-ver-de}

\hypersetup{
    colorlinks,
    citecolor=black,
    filecolor=black,
    linkcolor=black,
    urlcolor=black
}

\theoremstyle{plain}
\newtheorem{thm}{Theorem}[section]

\newtheorem{lem}[thm]{Lemma}
\newtheorem{prop}[thm]{Proposition}

\theoremstyle{definition}
\newtheorem{rmk}[thm]{Remark}

\newtheorem{df}[thm]{Definition}
\theoremstyle{remark}

\DeclareMathOperator{\id}{id}
\DeclareMathOperator{\ord}{ord}

\makeatletter
 \providecommand{\bigsqcap}{\mathop{\mathpalette\@updown\bigsqcup}}
 \newcommand*{\@updown}[2]{\rotatebox[origin=c]{180}{$\m@th#1#2$}}
\makeatother

\DeclareMathOperator{\tbigsqcap}{\textstyle\bigsqcap}
\DeclareMathOperator{\tbigsqcup}{\textstyle\bigsqcup}
\DeclareMathOperator{\tbigwedge}{\textstyle\bigwedge}

\newcommand{\s}{\mathcal S}

\renewcommand{\c}{\mathscr C}
\renewcommand{\l}{\mathscr L}
\renewcommand{\u}{\mathscr U}
\renewcommand{\v}{\mathscr V}
\newcommand{\w}{\mathscr W}
\renewcommand{\b}{\mathscr B}

\newcommand{\Z}{\mathbb Z}
\newcommand{\N}{\mathbb N}

\newcommand{\esp}{\vspace{.6em plus .15em minus .15em}}

\title{Expansive Systems on Lattices}

\author{M. Achigar}
\date{\today}
\keywords{expansive, positively expansive, lattice, dimension, entropy}
\subjclass[2010]{Primary: 37B05; Secondary: 06D99}
% 37B05 - Dynamical systems and ergodic theory | Transformations and group actions with special properties (minimality, distality, proximality, etc.)
% 06D99 Order, lattices, ordered algebraic structures | None of the above, but in this section

\begin{document}

\maketitle

\begin{abstract}
We study expansive dynamical systems in the setting of distributive lattices and their automorphisms, the usual notion of expansiveness for a homeomorphism of a compact metric space being the particular case when the lattice is the topology of the phase space ordered by inclusion and the automorphism the one induced by the homeomorphism, mapping open sets to open sets. We prove in this context generalizations of Mañé's Theorem and Utz's Theorem about the finite dimensionality of the phase space of an expansive system, and the finiteness of a space supporting a positively expansive homeomorphism, respectively. We also discuss the notion of entropy in this setting calculating it for the non-Hausdorff shifts.
\end{abstract}

\section{Introduction}\label{sec:intro}

Let $(X,d)$ be a metric space and $f\colon X\to X$ a homeomorphism. It is said that $f$ is \emph{expansive} iff there exist $\delta>0$, an \emph{expansivity constant}, such that if $x,y\in X$ and $d(f^nx,f^ny)<\delta$ for all $n\in\Z$ then $x=y$ (see \cite{Utz}*{\S1}). When $X$ is compact by \cite{Br62}*{Theorem 5} this is equivalent to \emph{uniform expansivity}, that is, for all $\varepsilon>0$ there exists $N\in\N$ such that if $x,y\in X$ and $d(f^nx,f^ny)<\delta$ for all $|n|\leq N$ then $d(x,y)<\varepsilon$. In that case, if we consider a finite open cover $\u$ of $X$ by sets of diameter less than $\delta$ we have that the following property holds:
\begin{equation}\label{ec:refexp}
\text{for any open cover $\v$ of $X$ there exists $N\in\N$ such that $\tbigwedge_{|n|\leq N}f^n\u\prec\v$,}
\end{equation}
where $\tbigwedge_{|n|\leq N}f^n\u$ denotes the collection of all subsets $U\subseteq X$ of the form $U=\bigcap_{|n|\leq N}f^nU_n$ with $U_{-N},\ldots,U_N\in\u$, and $\prec$ means that each of these subsets $U$ is contained in some member $V\in\v$. Indeed, given $\v$ let $\varepsilon>0$ a \emph{Lebesgue number} for $\v$, that is, if $U\subseteq X$ has diameter less or equal to $\varepsilon$ then $U$ is contained in some member of $\v$, and choose $N\in\N$ corresponding to this $\varepsilon$ in the definition of uniform expansivity above. Then, as the members of $\u$ has diameter less than $\delta$, the uniform expansivity property implies that the members of $\tbigwedge_{|n|\leq N}f^n\u$ has diameter less or equal to $\varepsilon$, hence $\tbigwedge_{|n|\leq N}f^n\u\prec\v$ as claimed. Conversely, is not difficult to see that the existence of an open cover $\u$ with the property (\ref{ec:refexp}) implies that $f$ is uniformly expansive, any Lebesgue number for $\u$ being an expansivity constant.

The above equivalent condition for the expansivity of a homeomorphism of a compact metric space in terms of open covers is interesting because it does not mention the metric, so that it has sense for any topological space non necessarily metrizable. This point of view is exploited in \cite{AAM} where the \emph{refinement expansive} homeomorphisms are introduced as those homeomorphisms of an arbitrary topological space admitting a finite open cover $\u$, called \emph{refinement expansivity cover}, having the property (\ref{ec:refexp}) above (see \cite{AAM}*{Definition 3.6 and Theorem 3.9}).

Other authors also considered topological definitions of expansivity not mentioning a metric, for example in \cite{KR} using \emph{generators} (see \cite{KR}*{Definition 2.4}), or in \cites{Fried,Br60} by means of closed neighborhoods isolating the diagonal in the square of the phase space (see \cite{Fried}*{p.\,489}, \cite{Br60}*{\S2 p.\,1163}). However all these definitions implies the metrizability of the phase space in the compact case (see \cite{KR}*{Corollary 2.8}, \cite{Fried}*{Lemma 2}, \cite{Br60}*{Corollary p.\,1164}), so that the concept of expansivity they manage is in fact equivalent to the usual one in the metric context. Refinement expansivity in turn does not imply metrizability and there are examples on non-Hausdorff spaces such as the \emph{non-Hausdorff shifts} introduced in \cite{AAM}*{\S 4.1}, whereas assuming the Hausdorff separation property refinement expansivity is equivalent to the usual metric expansivity.

In this work we exploit another important characteristic of the definition of refinement expansivity, that the other topological  definitions of expansivity discussed above does not have: it does not mention the points of the phase space, only the action of the homeomorphism on the open sets, that is, on the topology. Then refinement expansivity of a homeomorphism $f\colon X\to X$ of a topological space $(X,\tau)$ is actually a property of the map $\lambda_f\colon\l_X\to\l_X$, $\lambda_fU=f^{-1}U$, where $\l_X=\tau$. At this level we point out that in order to derive some properties of refinement expansive homeomorphisms we only need that $\l_X$ is closed by finite unions and intersections, and that $\lambda_f$ preserves them. Consequently we are moved to consider a definition of expansivity in the context of distributive lattices and their automorphisms, the topological context being a particular case.

In \S\ref{sec:exp} we introduce the expansive lattice automorphisms (Definition \ref{def:expnsive}) and prove some basic properties such as the invariance by conjugations and the expansivity of the powers (Propositions \ref{prop:exp_conjugado} and \ref{prop:exp_potencia}). In \S\ref{sec:mañé} Theorem \ref{teo:mañé} we generalize \emph{Mañé's Theorem} \cite{Ma} which asserts that if a compact metric space admits an expansive homeomorphism then it has finite topological dimension. In \S\ref{sec:utz} Theorem \ref{teo:exp+} we extend the result known as \emph{Utz's Theorem}, proved in \cites{CK,RW}, that does not exist positively expansive homeomorphisms in compact infinite metric spaces. Finally, in \S\ref{sec:shift} we briefly discuss a notion of entropy for lattice morphisms calculating the entropy of non-Hausdorff shifts in Proposition \ref{prop:nHshift-entropy}.

\subsection*{Acknowledgements} We want thank to Alfonso Artigue, Ignacio Monteverde and José Vieitez for their encourage and help to develop our ideas towards the study of expansive systems.

\section{Expansiveness}\label{sec:exp}

In this section we introduce the basic terminology and notation for lattices. We define expansivity for a lattice automorphism in Definition \ref{def:expnsive} and show some basic properties in Proposition \ref{prop:exp_conjugado} and Proposition \ref{prop:exp_potencia}.

\begin{df} A \emph{lattice} is an \emph{partially ordered set} $(\l,\leq)$, that is, $\leq$ a transitive, antisymmetric and reflexive relation on the set $\l$, such that for every pair of elements $u,v\in\l$ there exist (unique) elements $u\sqcup v,u\sqcap v\in\l$ which are the \emph{least upper bound} and the \emph{greatest lower bound}, respectively, of the set $\{u,v\}$. A lattice is called \emph{distributive} if one of the following equivalent conditions hold: 
\begin{enumerate}
 \item $u\sqcap(v\sqcup w)=(u\sqcap w)\sqcup(u\sqcap w)$ for all $u,v,w\in\l$, or\item $u\sqcup(v\sqcap w)=(u\sqcup w)\sqcap(u\sqcup w)$ for all $u,v,w\in\l$.  
\end{enumerate}
A lattice is said to be \emph{bounded} if $\l$ has maximum and minimum elements denoted $1$ and $0$ respectively. In this article the term \emph{lattice} stands for a bounded distributive lattice such that $0\neq1$, unless otherwise is stated.
\end{df}

If $\c\subseteq\l$ is a finite subset of a lattice then it is easy to see that there exist members of $\l$ which are the least upper bound and the greatest lower bound of $\c$, denoted $\bigsqcup\c$ and $\bigsqcap\c$, respectively. In particular we have $\bigsqcup\varnothing=0$ and $\bigsqcap\varnothing=1$.

Note that if $(X,\tau)$ is a topological space then the topology $\l_X=\tau$ ordered by inclusion, that is $\leq{=}\subseteq$, is a lattice, with minimum element $0=\varnothing$ and maximum $1=X$. The reader interested only in topological dynamics can always assume in what follows that we are working on lattices of this type, with the union and intersection operations as the operations $\sqcup$ and $\sqcap$ of the lattice.

\begin{df} Let $\l$ be a lattice and $\u,\v\subseteq\l$. We say than $\u$ is \emph{finer} than $\v$, or that $\u$ \emph{refines} $\v$ iff for all $u\in\u$ there exists $v\in\v$ such that $u\leq v$, and we write $\u\prec\v$. If $u\in\l$ the notation $u\prec\v$ means that $u\leq v$ for some $v\in\v$. We say that $\u$ is \emph{equivalent} to $\v$, denoted $\u\sim\v$, iff $\u\prec\v$ and $\v\prec\u$. We also define $\u\wedge\v=\{u\sqcap v:u\in\u,v\in\v\}$, and for $u\in\u$ we denote $u\wedge\v=\{u\}\wedge\v$. We say that $\u$ is a \emph{cover} iff $\u$ is a finite set such that $\bigsqcup\u=1$, we write $\c(\l)$ for the set of all covers. If $\u$ is a cover any cover $\v$ such that $\v\subseteq\u$ is called a \emph{subcover} of $\u$.
\end{df}

Note that $\prec$ is a \emph{preorder} (a transitive and reflexive relation) and that $\sim$ is an equivalence relation. The operation $\wedge$ is associative and commutative, $\u\wedge\u\sim\u$ if $\u\subseteq\l$, and $\u\wedge\v$ is a greatest lower bound of $\{\u,\v\}$, that is, $\w\prec\u$ and $\w\prec\v$ for some $\u,\v,\w\subseteq\l$ iff $\w\prec\u\wedge\v$. Note also that by the distributive law if $\u,\v$ are covers then $1=1\sqcap1=(\textstyle\bigsqcup\u)\sqcap(\bigsqcup\v)=\bigsqcup\u\wedge\v$, so that $\u\wedge\v$ is also a cover.

\begin{df}
 Let $\lambda\colon\l\to\l'$ be a map between lattices $\l$ and $\l'$. It is called \emph{monotone} iff for $u,v\in\l$ we have that $u\leq v$ implies $\lambda u\leq\lambda v$, a \emph{morphism} iff $\lambda(u\sqcup v)=\lambda u\sqcup\lambda v$ and $\lambda(u\sqcap v)=\lambda u\sqcap\lambda v$ for all $u,v\in\l$, an \emph{isomorphism} iff it is a bijective morphism, and \emph{unital} if $\lambda(0)=0$ and $\lambda(1)=1$. An \emph{automorphism} of $\l$ is an isomorphism from $\l$ to itself.  If $\u\subseteq\l$ we denote $\lambda\u=\{\lambda u:u\in\u\}$. If $\lambda$ is an automorphism and $n\in\Z$ we define  $\lambda^n$ as the $n$-fold composition $\lambda\circ\cdots\circ\lambda$ if $n>0$, $\lambda^0=\id_\l$ the identity map, and $\lambda^n=(\lambda^{-1})^{-n}$ if $n<0$.
\end{df} 

It is easily checked that composition of morphisms is a morphism and that the inverse map of an isomorphism is an isomorphism, then we see that lattices and their morphisms form a \emph{category} whose isomorphisms are precisely the lattice isomorphisms. It is straightforward to verify that any morphism is monotone, and by \cite{SS81}*{Theorem 2.3} we have that a monotone bijection whose inverse map is also monotone is necessarily an isomorphism.

Let $\lambda\colon\l\to\l'$ a map between lattices and $\u,\v\subseteq\l$. If $\u\prec\v$ and $\lambda$ is monotone then $\lambda\u\prec\lambda\v$. If $\lambda$ is a morphism then $\lambda(\u\wedge\v)=\lambda\u\wedge\lambda\v$. If $\u$ is a cover, $\lambda$ is a morphism and $\lambda(1)=1$ (in particular if $\lambda$ is an isomorphism) then $\lambda\u$ is also a cover.

For topological spaces $(X,\tau_X)$, $(Y,\tau_Y)$ any continuous map $f\colon X\to Y$ induces a map $\lambda_f\colon\l_Y\to\l_X$ between the lattices $\l_Y=\tau_Y$ and $\l_X=\tau_X$ given by $\lambda_f u=f^{-1}u$ if $u$ is an open set in $Y$. We see that $\lambda_f$ is a unital morphism, and $\lambda$ is an isomorphism iff $f$ is a homeomorphism. The assignment $X\mapsto\l_X$, $f\mapsto\lambda_f$ gives a \emph{contra-variant functor} from the category of topological spaces and continuous functions to the category of lattices and their unital morphisms.

\begin{df}\label{def:expnsive}
 Let $\l$ be a lattice and $\lambda\colon\l\to\l$ an automorphism. We say that $\lambda$ is \emph{expansive} iff there exists a cover $\u\in\c(\l)$ with the following property:
 \begin{center}
 for every $\v\in\c(\l)$ there exists $N\in\N$ such that $\bigwedge_{|n|\leq N}\lambda^n\u\prec\v$.
 \end{center}
 Any such cover $\u$ is called \emph{expansivity cover} for $\lambda$.
\end{df}

If $f\colon X\to X$ is a homeomorphism of a compact topological space, and $\lambda_f\colon\l_X\to\l_X$ is the induced lattice automorphism, then $\lambda_f$ is expansive according to Definition \ref{def:expnsive} iff $f$ is \emph{uniformly refinement expansive} as in \cite{AAM}*{Definition 3.6}, where an expansivity cover for $\lambda_f$ is called \emph{uniform refinement expansivity cover} for $f$. By  \cite{AAM}*{Theorems 3.9, 3.13 and 2.7} we have that if $X$ is a compact Hausdorff space then $\lambda_f$ is expansive iff $X$ is metrizable and $f$ is expansive in the usual metric sense.

Note that if $\u$ is an expansivity cover for a lattice automorphism $\lambda$ then any cover finer than $\u$, in particular any cubcover of $\u$, is also an expansivity cover for $\lambda$. This is because $\v\prec\u$ implies $\bigwedge_{|n|\leq N}\lambda^n\v\prec\bigwedge_{|n|\leq N}\lambda^n\u$ if $N\in\N$.

\begin{rmk} In \cite{AH} a notion of expansivity for automorphisms of unital abelian rings is introduced, which is similar to Definition \ref{def:expnsive}. The ordered set $\l$ considered in that article is the collection of ideals ordered by inclusion and the automorphism of $\l$ the one induced by the ring automorphism mapping ideas to ideals. However in the set of ideals the authors considered as the operations $\sqcup$ and $\sqcap$ the sum and the product of ideals. Although distributivity holds for these operations clearly the product of ideals does not give in general a greatest lower bound. 
\end{rmk}

To end this section we present two basic results generalizing well know properties of expansive homeomorphisms.

\begin{prop}\label{prop:exp_conjugado}
 Let $\lambda,\lambda'$ be lattice automorphisms such that $\varphi\lambda=\lambda'\varphi$ for some lattice isomorphism $\varphi$. Then $\lambda$ is expansive iff $\lambda'$ is expansive.
\end{prop}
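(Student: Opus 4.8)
The plan is to prove that $\varphi$ carries an expansivity cover for $\lambda$ to one for $\lambda'$; since the hypothesis $\varphi\lambda=\lambda'\varphi$ is symmetric under replacing the triple $(\lambda,\lambda',\varphi)$ by $(\lambda',\lambda,\varphi^{-1})$, establishing one implication yields the other for free.

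First I would note, by a short induction, that $\varphi\lambda=\lambda'\varphi$ forces $\varphi\lambda^n=(\lambda')^n\varphi$ for every $n\in\Z$: the case $n\ge 0$ is immediate, and for $n<0$ one rewrites $\varphi\lambda=\lambda'\varphi$ as $\lambda^{-1}\varphi^{-1}=\varphi^{-1}(\lambda')^{-1}$ and iterates. Consequently $(\lambda')^n(\varphi\mathscr{U})=\varphi(\lambda^n\mathscr{U})$ for every $\mathscr{U}\subseteq\l$ and every $n\in\Z$.

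Now suppose $\mathscr{U}\in\c(\l)$ is an expansivity cover for $\lambda$ and put $\mathscr{U}'=\varphi\mathscr{U}$. Since an isomorphism is a morphism fixing $1$, $\mathscr{U}'$ is a cover, as recorded in \S\ref{sec:exp}. Given an arbitrary $\mathscr{V}'\in\c(\l)$ (read in $\l'$), set $\mathscr{V}=\varphi^{-1}\mathscr{V}'$, again a cover. Expansivity of $\lambda$ provides $N\in\N$ with $\bigwedge_{|n|\le N}\lambda^n\mathscr{U}\prec\mathscr{V}$. Applying $\varphi$, which is monotone and therefore preserves $\prec$, and using that a morphism commutes with $\wedge$ and hence with the finite meet $\bigwedge_{|n|\le N}$, we obtain
\[
\bigwedge_{|n|\le N}(\lambda')^n\mathscr{U}'=\bigwedge_{|n|\le N}\varphi(\lambda^n\mathscr{U})=\varphi\!\left(\bigwedge_{|n|\le N}\lambda^n\mathscr{U}\right)\prec\varphi\mathscr{V}=\mathscr{V}'.
\]
Thus $\mathscr{U}'$ is an expansivity cover for $\lambda'$, so $\lambda$ expansive implies $\lambda'$ expansive, and the converse follows by the symmetry noted above.

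I do not anticipate a genuine obstacle: this is the lattice counterpart of the classical fact that expansiveness is a conjugacy invariant, and every tool needed --- morphisms are monotone, monotone maps preserve $\prec$, morphisms commute with $\wedge$, unital morphisms send covers to covers --- is already collected in the preliminaries. The only points asking for a moment's care are the extension of the intertwining relation to negative powers and the verification that $\varphi$ distributes over the finite meet $\bigwedge_{|n|\le N}$; both are routine.
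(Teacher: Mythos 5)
Your proposal is correct and follows exactly the route the paper takes: the paper's proof simply states that $\varphi\u$ is an expansivity cover for $\lambda'$ and leaves the verification (intertwining of powers, preservation of covers, $\prec$, and $\wedge$ under $\varphi$, plus the symmetry for the converse) to the reader, which is precisely what you carry out in detail.
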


\begin{proof}
 If $\u$ is an expansivity cover for $\lambda$ then it is easily checked that $\varphi\u$ is an expansivity cover for $\lambda'$. 
\end{proof}

\begin{prop}\label{prop:exp_potencia}
 Let $\lambda$ be a lattice automorphism and $m\in\Z$, $m\neq0$. Then $\lambda$ is expansive iff $\lambda^m$ is expansive.
\end{prop}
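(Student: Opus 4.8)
The plan is to prove both implications by producing an explicit expansivity cover for one automorphism out of one for the other. First I would reduce to the case $m > 0$: since $\lambda$ is expansive iff $\lambda^{-1}$ is expansive (immediate from Definition~\ref{def:expnsive}, because $\bigwedge_{|n|\leq N}\lambda^n\u = \bigwedge_{|n|\leq N}(\lambda^{-1})^n\u$, the index set $\{-N,\dots,N\}$ being symmetric), and $\lambda^{-m} = (\lambda^{-1})^m$, it suffices to handle $m \geq 1$. Then I would prove the two directions separately.

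For the forward direction, suppose $\u$ is an expansivity cover for $\lambda$; I claim $\u$ is also an expansivity cover for $\lambda^m$. Given $\v \in \c(\l)$, expansivity of $\lambda$ gives $N \in \N$ with $\bigwedge_{|n|\leq N}\lambda^n\u \prec \v$. Now $\{\lambda^n : |n|\leq N\}$ is a subcollection of $\{(\lambda^m)^k : |k| \leq N\}$ once we observe that each $\lambda^n$ with $|n| \leq N$ equals $(\lambda^m)^k \cdot \lambda^{n - km}$ for suitable $k$; more cleanly, I would instead take $\u' = \bigwedge_{|j| \leq m-1}\lambda^j\u$ — no, the cleanest route is: set $\u$ itself and note that $\bigwedge_{|k|\leq N}(\lambda^m)^k\u = \bigwedge_{|k|\leq N}\lambda^{mk}\u$, and since $\{mk : |k|\leq N\} \subseteq \{-mN,\dots,mN\}$ we get $\bigwedge_{|n|\leq mN}\lambda^n\u \prec \bigwedge_{|k|\leq N}(\lambda^m)^k\u$. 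Running the chain the other way: expansivity of $\lambda$ with target $\v$ yields $N$ with $\bigwedge_{|n|\leq N}\lambda^n\u \prec \v$; enlarging $N$ to a multiple $mN'$ with $mN' \geq N$ we get $\bigwedge_{|k|\leq N'}(\lambda^m)^k\u \succ \bigwedge_{|n|\leq mN'}\lambda^n\u \prec \v$. Wait — the refinement goes the right way: $\bigwedge_{|k|\leq N'}(\lambda^m)^k\u$ is a subcollection-generated meet sitting inside $\bigwedge_{|n|\leq mN'}\lambda^n\u$, hence finer, hence $\prec \v$. So $\u$ works for $\lambda^m$ with constant $N'$.

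For the reverse direction, suppose $\u$ is an expansivity cover for $\lambda^m$; I would show that $\w := \bigwedge_{0 \leq j \leq m-1}\lambda^j\u$ is an expansivity cover for $\lambda$. Given $\v \in \c(\l)$, get $N$ with $\bigwedge_{|k|\leq N}(\lambda^m)^k\u \prec \v$. Then consider $\bigwedge_{|n|\leq mN + m}\lambda^n\w = \bigwedge_{|n|\leq mN+m}\bigwedge_{0\leq j\leq m-1}\lambda^{n+j}\u$, and observe that as $n$ ranges over $\{-mN-m,\dots,mN+m\}$ and $j$ over $\{0,\dots,m-1\}$, the exponents $n+j$ cover all of $\{-mN,\dots,mN\} \supseteq \{mk : |k|\leq N\}$; therefore this meet is finer than $\bigwedge_{|k|\leq N}\lambda^{mk}\u = \bigwedge_{|k|\leq N}(\lambda^m)^k\u \prec \v$. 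Since $\w$ is a cover (a finite meet of covers is a cover, as noted in the excerpt, using that each $\lambda^j$ is an automorphism hence sends covers to covers), this shows $\lambda$ is expansive with expansivity cover $\w$.

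The main obstacle — really the only thing requiring care — is bookkeeping of index sets: making sure that the finite meet over the window $|n|\leq (\text{something})$ for one automorphism genuinely refines the finite meet over the window for the other, i.e. that the relevant set of exponents is \emph{contained} in the other (more exponents gives a finer cover, via $\bigwedge_{n\in A}\lambda^n\u \prec \bigwedge_{n\in B}\lambda^n\u$ when $B \subseteq A$, because the extra factors $\lambda^n u_n$ only shrink meets and $u \sqcap u' \leq u$). I would state that monotonicity-of-meets fact once at the start and then the two directions are routine.
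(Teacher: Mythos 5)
Your reverse direction ($\lambda^m$ expansive $\Rightarrow$ $\lambda$ expansive) is correct, though heavier than necessary: since $\{mk:|k|\leq N\}\subseteq\{-mN,\dots,mN\}$, the cover $\u$ itself is already an expansivity cover for $\lambda$ (this is what the paper does); your extra meet $\w=\bigwedge_{j=0}^{m-1}\lambda^j\u$ is harmless but not needed. The genuine problem is the forward direction. There you claim that an expansivity cover $\u$ for $\lambda$ is itself an expansivity cover for $\lambda^m$, justified by saying that $\bigwedge_{|k|\leq N'}(\lambda^m)^k\u$ is ``a subcollection-generated meet sitting inside $\bigwedge_{|n|\leq mN'}\lambda^n\u$, hence finer.'' This reverses the monotonicity fact you correctly state in your final paragraph: a meet over a \emph{smaller} exponent set is \emph{coarser}, not finer (for $B\subseteq A$ one has $\bigwedge_{n\in A}\lambda^n\u\prec\bigwedge_{n\in B}\lambda^n\u$), so from $\bigwedge_{|n|\leq mN'}\lambda^n\u\prec\v$ you can conclude nothing about the coarser cover $\bigwedge_{|k|\leq N'}\lambda^{mk}\u$. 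Moreover the claim itself is false, not merely the argument: in the full $2$-shift with $\u$ the cover by $0$-cylinders, $\bigwedge_{|k|\leq N}\sigma^{2k}\u$ constrains only the coordinates at even positions, so none of its members is contained in a member of $\v=\bigwedge_{|n|\leq1}\sigma^n\u$ (which fixes coordinate $1$); thus $\u$ is an expansivity cover for $\sigma$ but not for $\sigma^2$.

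The fix is exactly the alternative you floated and then discarded (``I would instead take $\u'=\bigwedge_{|j|\leq m-1}\lambda^j\u$ --- no''): you must refine the cover in this direction. The paper sets $\u_m=\bigwedge_{k=0}^{m-1}\lambda^k\u$ and computes $\bigwedge_{|n|\leq N}\lambda^{mn}\u_m=\bigwedge_{n=-mN}^{mN+m-1}\lambda^n\u$, whose exponent set is a full interval containing $\{-mN,\dots,mN\}$; hence it refines $\bigwedge_{|n|\leq mN}\lambda^n\u$, and choosing $N$ so that $mN$ exceeds the constant furnished by expansivity of $\lambda$ for the given $\v$ yields $\bigwedge_{|n|\leq N}\lambda^{mn}\u_m\prec\v$. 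With that replacement (and your reduction to $m>0$, which matches the paper), the proof goes through; note the asymmetry of the two directions: passing to a power needs the thickened cover, while passing from a power to $\lambda$ does not.
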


\begin{proof}
 As by Definition \ref{def:expnsive} we clearly have that $\lambda$ is expansive iff $\lambda^{-1}$ is expansive, we can assume $m>0$. If $\lambda$ is expansive let $\u$ be an expansivity cover for $\lambda$ and define the cover $\u_m=\tbigwedge_{k=0}^{m-1}\lambda^k\u$. For every $N\in\N$ we have
 \begin{equation*}
 \begin{split}
  \tbigwedge_{|n|\leq N}\lambda^{mn}\u_m
  &=\tbigwedge_{|n|\leq N}\lambda^{mn}\bigl(\tbigwedge_{k=0}^{m-1}\lambda^k\u\bigr)
  =\tbigwedge_{|n|\leq N}\bigl(\tbigwedge_{k=0}^{m-1}\lambda^{mn+k}\u\bigr)\\
  &=\tbigwedge_{n=-mN}^{mN+m-1}\lambda^n\u
   \prec\tbigwedge_{|n|\leq mN}\lambda^n\u. 
 \end{split}
 \end{equation*}
 Then we see that $\u_m$ is an expansivity cover for $\lambda^m$ which is therefore expansive. Conversely, if $\lambda^m$ is expansive then $\lambda$ is expansive as well, because if $\u\in\c(\l)$ and $N\in\N$ clearly we have $\tbigwedge_{|n|\leq mN}\lambda^n\u\prec\tbigwedge_{|n|\leq N}\lambda^{mn}\u$. Then any expansivity cover $\u$ for $\lambda^m$ is also an expansivity cover for $\lambda$.
\end{proof}

\section{Mañé's Theorem}\label{sec:mañé}

In \cite{Ma} Mañé proves that a compact metric space admitting an expansive homeomorphism has finite topological dimension. In this section we consider the generalization of this result to the context of lattice automorphisms in Theorem \ref{teo:mañé}.

\esp The concept of dimension we use here is the obvious natural version for lattices of the \emph{Lebesgue's covering dimension} as defined for example in \cite{Na}*{Definition I.4}.

\begin{df}\label{def:dimensión}
Let $\l$ be a lattice and $\u\in\c(\l)$ a cover. The \emph{order} of $\u$ is
$$
\ord\u=\max\{n\in\N:\exists\;\v\subseteq\u\text{ such that }|\v|=n\text{ and }\tbigsqcap\v\neq0\},
$$
where $|\cdot|$ denotes cardinality. The \emph{dimension} of $\l$ is defined as
$$
\dim\l=\min\{n\in\N:\forall\;\u\in\c(\l)\;\exists\;\v\in\c(\l)\mid\v\prec\u,\,\ord\v\leq n+1\},
$$
if the latter set is not empty, and $\dim\l=\infty$ if it is.
\end{df}

Note that if $X$ is a compact topological space and $\l_X$ is the associated lattice then $\dim\l_X$ as defined before coincides with the covering dimension of $X$. 

\esp\par The following result corresponds to \cite{Ma}*{\S2, Lemma I} of Mañé's paper.

\begin{lem}\label{lem:mañé}
Let $\lambda\colon\l\to\l$ be an expansive lattice automorphism and $\u$ an expansivity cover for  $\lambda$. Then, for every cover $\v$ there exists a cover $\w$ such that
$$
\lambda^{-n}\w\wedge\bigl(\tbigwedge_{|k|\leq n}\lambda^k\u\bigr)\wedge\lambda^n\w\prec\tbigwedge_{|k|\leq n}\lambda^k\v,
$$
for all $n\in\N$.
\end{lem}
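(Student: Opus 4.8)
The plan is to take the cover $\w=\tbigwedge_{|i|\le N}\lambda^i\u$, where $N\in\N$ is the integer furnished by Definition \ref{def:expnsive} applied to the cover $\v$, so that $\tbigwedge_{|i|\le N}\lambda^i\u\prec\v$; note $\w$ is indeed a cover, since $\u$ is one and both automorphisms and the operation $\wedge$ send covers to covers. With this choice the lemma becomes an exercise in bookkeeping of index ranges, resting on three routine facts from \S\ref{sec:exp}: if $J\subseteq I$ are finite sets of integers then $\tbigwedge_{m\in I}\lambda^m\u\prec\tbigwedge_{m\in J}\lambda^m\u$; a collection refining $\lambda^k\v$ for every $|k|\le n$ refines the meet $\tbigwedge_{|k|\le n}\lambda^k\v$; and each $\lambda^k$ is an automorphism, hence commutes with $\wedge$ and preserves $\prec$ in both directions.

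First I would rewrite the left-hand side. Since $\lambda^{-n}\w=\tbigwedge_{-n-N\le m\le-n+N}\lambda^m\u$ and $\lambda^{n}\w=\tbigwedge_{n-N\le m\le n+N}\lambda^m\u$, the three index intervals $[-n-N,-n+N]$, $[-n,n]$ and $[n-N,n+N]$ overlap consecutively (because $N\ge0$) and their union is the whole interval $[-n-N,n+N]$. Writing a generic element of $\lambda^{-n}\w\wedge\bigl(\tbigwedge_{|k|\le n}\lambda^k\u\bigr)\wedge\lambda^n\w$ as a meet of three factors and dominating it, for each $m\in[-n-N,n+N]$, by the appropriate component of whichever of the three factors has $m$ in its range, one obtains
$$
\lambda^{-n}\w\wedge\Bigl(\tbigwedge_{|k|\le n}\lambda^k\u\Bigr)\wedge\lambda^n\w\prec\tbigwedge_{|m|\le n+N}\lambda^m\u .
$$

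Second I would check $\tbigwedge_{|m|\le n+N}\lambda^m\u\prec\tbigwedge_{|k|\le n}\lambda^k\v$. Fix $k$ with $|k|\le n$ and apply the automorphism $\lambda^{-k}$: it carries $\tbigwedge_{|m|\le n+N}\lambda^m\u$ to $\tbigwedge_{-n-N-k\le m\le n+N-k}\lambda^m\u$, and precisely because $|k|\le n$ the range $[-n-N-k,\,n+N-k]$ contains $[-N,N]$; hence this collection refines $\tbigwedge_{|i|\le N}\lambda^i\u$, which refines $\v$ by the choice of $N$. Applying $\lambda^k$ back gives $\tbigwedge_{|m|\le n+N}\lambda^m\u\prec\lambda^k\v$ for every $|k|\le n$, so by the second routine fact above it refines $\tbigwedge_{|k|\le n}\lambda^k\v$. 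Chaining this with the previous display yields the lemma for all $n\in\N$.

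The step that needs care is not any single inequality but the index-range accounting: one must verify that the three intervals glue to $[-n-N,n+N]$ for every $n\ge0$ (the degenerate cases $n=0$ and $N=0$ included), and that after translating by $-k$ the range still swallows $[-N,N]$ exactly under the hypothesis $|k|\le n$. Beyond that, everything is the formal calculus of covers and refinements already recorded in \S\ref{sec:exp}, in particular that morphisms preserve $\wedge$ and $\prec$ and that $\wedge$ computes greatest lower bounds for $\prec$.
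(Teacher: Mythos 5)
Your proof is correct and follows essentially the same route as the paper: the same choice $\w=\tbigwedge_{|i|\le N}\lambda^i\u$ with $N$ given by expansivity applied to $\v$, the same intermediate refinement into $\tbigwedge_{|m|\le n+N}\lambda^m\u$, and the same conclusion via the fact that $\wedge$ is a greatest lower bound for $\prec$. The only difference is cosmetic: where the paper observes directly that $[l-N,l+N]\subseteq[-n-N,n+N]$ for $|l|\le n$, you translate by $\lambda^{-k}$ and back, which is equivalent.
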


\begin{proof}
Given $\v\in\c(\l)$ Let $N\in\N$ be such that $\tbigwedge_{|k|\leq N}\lambda^k\u\prec\v$ and define $\w=\tbigwedge_{|k|\leq N}\lambda^k\u$. Then, for all $n\in\N$ and $l\in\Z$ such that $|l|\leq n$ we have
\begin{equation*}
\begin{split}
\lambda^{-n}\w\wedge\bigl(\displaystyle\tbigwedge_{|k|\leq n}\lambda^k\u\bigr)&\wedge\lambda^n\w=\\
&=\bigl(\displaystyle\tbigwedge_{k=-N-n}^{N-n}\lambda^k\u\bigr)\wedge\bigl(\displaystyle\tbigwedge_{k=-n}^n\lambda^k\u\bigr)\wedge\bigl(\displaystyle\tbigwedge_{k=n-N}^{n+N}\lambda^k\u\bigr)\\
&\prec\displaystyle\tbigwedge_{k=-n-N}^{n+N}\lambda^k\u
\prec\displaystyle\tbigwedge_{k=l-N}^{l+N}\lambda^k\u\\
&=\lambda^l\bigl(\displaystyle\tbigwedge_{|k|\leq N}\lambda^k\u\bigr)
=\lambda^l\w\prec\lambda^l\v.
\end{split}
\end{equation*}
Since $|l|\leq n$ was arbitrary we conclude that
$$
\lambda^{-n}\w\wedge\bigl(\displaystyle\tbigwedge_{|k|\leq n}\lambda^k\u\bigr)\wedge\lambda^n\w\prec\displaystyle\tbigwedge_{|l|\leq n}\lambda^l\v,
$$
for all $n\in\N$ and we are done.
\end{proof}

Let $\l$ be a lattice and consider a cover $\u\in\c(\l)$. We denote
$$
\u^2=\{u_1\sqcup u_2:u_1,u_2\in\u,u_1\sqcap u_2\neq0\}.
$$
Note that $\u^2\in\c(\l)$ and $\u\prec\u^2$ because if $u\in\u$ and $u\neq0$ then $u\in\u^2$. If $\v\in\c(\l)$ then $(\u\wedge\v)^2\prec\u^2\wedge\v^2$: indeed, if $w\in(\u\wedge\v)^2$ then $w=(u_1\sqcap v_1)\sqcup(u_2\sqcap v_2)$ for some $u_1,u_2\in\u$, $v_1,v_2\in\v$ such that $u_1\sqcap v_1\sqcap u_2\sqcap v_2\neq0$, hence $u_1\sqcap u_2\neq0$ and $v_1\sqcap v_2\neq0$, and therefore $w'=(u_1\sqcup u_2)\sqcap(v_1\sqcup v_2)\in\u^2\wedge\v^2$ and as $w\leq w'$ we are done. Note also that if $\lambda$ is an automorphism of $\l$ then $(\lambda\u)^2=\lambda(\u^2)$. Combining these properties we obtain the result that we remark next for future reference.  
 
\begin{rmk}\label{rmk:mañe}
 If $\lambda\colon\l\to\l$ is a lattice automorphism and $\u\in\c(\l)$ then 
 $$
 \bigl(\tbigwedge_{k\in I}\lambda^k\u\bigr)^2\prec\tbigwedge_{k\in I}\lambda^k(\u^2),
 $$
 for any finite set $I\subseteq\Z$.
\end{rmk}

For a finite subset $\v\subseteq\l$ of a lattice $\l$ an element $c\in\l$ is said to be a \emph{component} of $\v$ iff $c=\tbigsqcup_{i=1}^mv_i$ for some $m\geq1$ and $v_1,\ldots,v_m\in\v$, $v_j\sqcap\tbigsqcup_{i=1}^{j-1}v_i\neq0$ if $2\leq j\leq m$, and $c\sqcap v=0$ if $v\in\v\setminus\{v_1,\ldots,v_m\}$. We denote the set of all components of $\v$ as $c(\v)=\{c\in\l:c\text{ is a component of }\v\}$. Clearly we have $\v\prec c(\v)$, $\bigsqcup c(\v)=\bigsqcup\v$, and $c(\v)$ is \emph{pairwise disjoint}, that is, $c_1\sqcap c_2=0$ if $c_1,c_2\in c(\v)$ and $c_1\neq c_2$.

\esp Now we are ready to introduce the main result of this section.

\begin{thm}\label{teo:mañé}
 If a lattice $\l$ admits an expansive automorphism with an expansivity cover of the form $\u^2$ for some $\u\in\c(\l)$ then $\dim\l$ is finite.
\end{thm}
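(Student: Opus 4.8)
The plan is to argue by contradiction. Assume $\dim\l=\infty$. By Definition~\ref{def:dimensión} this means that for every $B\in\N$ there is a cover admitting no refinement of order $\le B$; hence it suffices to exhibit a single $B=B(\u)$, depending only on the cover $\u$, such that \emph{every} $\v\in\c(\l)$ admits a refinement of order $\le B$ (a natural candidate being a combinatorial quantity attached to $\u$, something like $\ord(\u^{2})$). Fix once and for all a cover $\u$ with $\u^{2}$ an expansivity cover for $\lambda$. Since $\u\prec\u^{2}$, the cover $\u$ is also an expansivity cover, and therefore so is every $\tbigwedge_{|k|\le n}\lambda^{k}\u$.

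Let $\v\in\c(\l)$ be arbitrary. Apply Lemma~\ref{lem:mañé} with expansivity cover $\u^{2}$ to obtain $\w\in\c(\l)$ with
$$\lambda^{-n}\w\wedge\Bigl(\tbigwedge_{|k|\le n}\lambda^{k}(\u^{2})\Bigr)\wedge\lambda^{n}\w\ \prec\ \tbigwedge_{|k|\le n}\lambda^{k}\v\qquad(n\in\N).$$
Writing $\u_{n}=\tbigwedge_{|k|\le n}\lambda^{k}\u$, Remark~\ref{rmk:mañe} gives $(\u_{n})^{2}\prec\tbigwedge_{|k|\le n}\lambda^{k}(\u^{2})$, so by monotonicity of $\wedge$
$$\lambda^{-n}\w\wedge(\u_{n})^{2}\wedge\lambda^{n}\w\ \prec\ \tbigwedge_{|k|\le n}\lambda^{k}\v\ \prec\ \v .$$
Since $\u$ and $\u^{2}$ are expansivity covers, we may additionally take $n$ large enough that $\u_{n}\prec\w$ and $\tbigwedge_{|k|\le n}\lambda^{k}(\u^{2})\prec\v$; the latter already forces $(\u_{n})^{2}\prec\v$, i.e.\ every join of two overlapping members of $\u_{n}$ lies inside a member of $\v$. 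This slack is exactly what the hypothesis ``the expansivity cover is a \emph{square} $\u^{2}$'' buys, and it is what will let us coarsen $\u_{n}$ without losing the refinement of $\v$.

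The heart of the matter is to replace the cover $\lambda^{-n}\w\wedge(\u_{n})^{2}\wedge\lambda^{n}\w$ — whose order a priori grows with $n$, the crude estimate being only $(\ord\w)^{2}(\ord\u)^{2n+1}$ — by an honest refinement of $\v$ of order at most $B=B(\u)$. This is where components enter: one re-groups the members of $\u_{n}$ into the connected clumps they form inside suitable members, so that in the directions responsible for the growth of the order the re-grouped family is pairwise disjoint (order $1$, by the properties of $c(\cdot)$), while the surviving overlaps are controlled by the \emph{fixed} cover $\u^{2}$ (equivalently by $\w$), which has fixed order; the slack $(\u_{n})^{2}\prec\v$ certifies that each clump still sits inside a member of $\v$, and the boundary factors $\lambda^{\pm n}\w$ are folded in using $\u_{n}\prec\w$. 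The outcome is a cover $\w'\prec\v$ with $\ord\w'\le B$. As $\v$ was arbitrary, $\dim\l\le B-1<\infty$, contradicting the assumption.

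I expect the genuine difficulty to be precisely this re-grouping step: making the ``components inside fixed members'' construction rigorous so that (i) membership in a member of $\v$ is preserved using only the single layer of squaring-slack available — note that $\u^{2}$ being an expansivity cover does \emph{not} make $(\u^{2})^{2}$ one, so the construction cannot be iterated — and (ii) the overlap pattern of $\w'$ is governed by a fixed finite cover rather than by the growing $\u_{n}$, even after both boundary factors $\lambda^{-n}\w$ and $\lambda^{n}\w$ are incorporated, so that their product does not re-introduce an unbounded order. Once this is in place, the remaining ingredients (Lemma~\ref{lem:mañé}, Remark~\ref{rmk:mañe}, and the choices of $n$) are routine.
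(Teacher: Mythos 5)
Your proposal identifies the right ingredients (Lemma \ref{lem:mañé}, Remark \ref{rmk:mañe}, components, the ``squaring slack''), but the step you yourself flag as the genuine difficulty is in fact left unproven, and the way you have set things up blocks the resolution. The decisive move in the paper is to apply Lemma \ref{lem:mañé} with expansivity cover $\u^2$ and with the \emph{fixed} cover $\u$ as target, obtaining one $\w$ (independent of $\v$) with
$\lambda^{-n}\w\wedge\bigl(\tbigwedge_{|k|\leq n}\lambda^k\u^2\bigr)\wedge\lambda^n\w\prec\u_n$
for all $n$, where $\u_n=\tbigwedge_{|k|\leq n}\lambda^k\u$. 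Because the conclusion of this relation lands back in $\u_n$ (not merely in $\v$), the squaring argument \emph{can} be iterated along a component chain: if $v=\tbigsqcup_{i=1}^m v_i$ is a component of $w\wedge(\u_n\wedge\lambda^{-n}\w\wedge\lambda^n\w)$ for $w\in\w_n=\lambda^{-n}\w\wedge\lambda^n\w$, then $v_1\sqcup v_2\prec\u_n^2\prec\tbigwedge_{|k|\leq n}\lambda^k\u^2$ and $v_1\sqcup v_2\leq w$, hence $v_1\sqcup v_2\prec\u_n$; since this partial join again refines $\u_n$, the same step applies to $(v_1\sqcup v_2)$ and $v_3$, and so on, giving $v\prec\u_n$. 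So your observation that ``the construction cannot be iterated'' is precisely where your route, which aims the lemma at $\v$ directly and only secures $(\u_n)^2\prec\v$, fails: one layer of slack toward $\v$ cannot absorb a clump of arbitrarily many members, whereas one layer of slack toward $\u_n$ can, by bootstrapping.

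There is a second, related gap: by choosing $\w=\w(\v)$ per target cover $\v$, any order bound you extract (the natural one being $|\w|^2$, coming from the fact that the components inside a fixed $w\in\w_n$ are pairwise disjoint and $|\w_n|\leq|\w|^2$) depends on $\v$, while Definition \ref{def:dimensión} requires a single bound valid for all covers; your candidate $\ord(\u^2)$ is never substantiated and is not what the construction yields. The paper sidesteps this entirely: $\w$ is fixed once (depending only on $\u$), the covers $\v_n^\w$ built from components satisfy $\ord\v_n^\w\leq|\w|^2$ for every $n$, and they refine an arbitrary prescribed cover simply because $\v_n^\w\prec\u_n$ and $\u$ (being refined by no coarser object than $\u^2$, i.e.\ $\u\prec\u^2$) is itself an expansivity cover, so $\u_n$ eventually refines any cover. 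In short: the architecture of your argument (lemma applied to $\v$, slack measured against $\v$) would need to be reorganized as in the paper (lemma applied to $\u$, slack measured against $\u_n$) before the component re-grouping you describe can be carried out rigorously.
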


\begin{proof} 
Let $\lambda$ be a lattice automorphism of $\l$ and $\u\in\c(\l)$ such that $\u^2$ is an expansivity cover for $\lambda$. Then, by Lemma \ref{lem:mañé}, there exists $\w\in\c(\l)$ such that
\begin{equation}\label{ec:dimfin1}
 \lambda^{-n}\w\wedge\bigl(\tbigwedge_{|k|\leq n}\lambda^k\u^2\bigr)\wedge \lambda^n\w\prec\tbigwedge_{|k|\leq n}\lambda^k\u,
\end{equation}
for all $n\in\N$. We claim that $\dim\l<|\w|^2$. Given $n\in\N$ consider the covers
$$
\w_n=\lambda^{-n}\w\wedge \lambda^n\w,
\quad\u_n=\tbigwedge_{|k|\leq n}\lambda^k\u,
\quad
\v_n=\lambda^{-n}\w\wedge\u_n\wedge \lambda^n\w=\u_n\wedge\w_n.
$$
For each $w\in\w_n$ consider $\v_n^w\subseteq\l$ as $\v_n^w=c(w\wedge\v_n)$ the set of components of $w\wedge\v_n=\{w\sqcap v:v\in\v_n\}$, and define $\v_n^\w=\bigcup_{w\in\w_n}\v_n^w$. Note that $\bigsqcup\w_n^w=\bigsqcup w\wedge\v_n=w$ for each $w\in\w_n$ because $\bigsqcup\v_n=1$, hence, as $\bigsqcup\w_n=1$, we have $\bigsqcup\v_n^\w=1$, that is, $\v_n^\w$ is a cover.

To prove the claim we will show that for all $n\in\N$ we have ($a$) $\ord\v_n^\w\leq|\w|^2$, and ($b$) $\v_n^\w\prec\u_n$, so that $\v_n^\w$ can be taken finer than any given cover, because, as $\u\prec\u^2$, $\u$ is an expansivity cover for $\lambda$ and $\u_n=\tbigwedge_{|k|\leq n}\lambda^k\u$ by definition. 

To prove ($b$) suppose that $v\in\v_n^\w$. Then $v\in\v_n^w$ for some $w\in\w_n$. Let $v_1,\ldots,v_m\in w\wedge\v_n$ such that the component $v$ of $w\wedge\v_n$ can be written as $v=\bigsqcup_{i=1}^mv_i$, and $v_j\sqcap\tbigsqcup_{i=1}^{j-1}v_i\neq0$ if $2\leq j\leq m$. On one hand, as $v_1,v_2\prec\v_n\prec\u_n$ and $v_1\sqcap v_2\neq0$, we obtain $v_1\sqcup v_2\prec\u_n^2$. Now, by Remark \ref{rmk:mañe} we have
$$
\u_n^2
=\bigl(\displaystyle\tbigwedge_{|k|\leq n}\lambda^k\u\bigr)^2
\prec\displaystyle\tbigwedge_{|k|\leq n}\lambda^k\u^2,
$$
hence $v_1\sqcup v_2\prec\tbigwedge_{|k|\leq n}\lambda^k\u^2$. On the other hand, as $v_1,v_2\leq w\in\w_n$ we deduce $v_1\sqcup v_2\prec\w_n=\lambda^{-n}\w\wedge \lambda^n\w$. From both facts we get $v_1\sqcup v_2\prec\lambda^{-n}\w\wedge\bigl(\tbigwedge_{|k|\leq n}\lambda^k\u^2\bigr)\wedge\lambda^n\w$, and therefore $v_1\sqcup v_2\prec\u_n$ by the relation (\ref{ec:dimfin1}) above. Similarly, by the same reasoning applied this time to $v_1\sqcup v_2$ and $v_3$ instead of $v_1$ and $v_2$, we have $v_1\sqcup v_2\sqcup v_3\prec\u_n$. Repeating this procedure we conclude that $v=\tbigsqcup_{i=1}^mv_i\prec\u_n$, as desired.

Finally, to show ($a$), note that $\w_n=\lambda^{-n}\w\wedge \lambda^n\w$ has at most $|\w|^2$ members. Then, as for each $w\in\w_n$ we have that $\v_n^w$ is pairwise disjoint, we deduce  that at most $|\w|^2$ members of $\v_n^\w=\bigcup_{w\in\w_n}\v_n^w$ can have a meet different from $0$. That is, $\ord\v_n^\w\leq|\w|^2$.
\end{proof}

\begin{rmk}
 For every expansive homeomorphism of a compact metric space with expansivity constant $\delta>0$, any finite open cover $\u$ whose members has diameter less than $\delta/2$ satisfies that the members of $\u^2$ has diameter less than $\delta$, and therefore $\u^2$ is a refinement expansivity cover for the homeomorphism as explained in \cite{AAM}*{p.\,110}. Hence, we see that the lattice automorphism associated to an expansive homeomorphism of a compact metric space always verifies the hypothesis of Theorem \ref{teo:mañé}, and consequently we obtain Mañé's Theorem as a particular case. 
\end{rmk}

\section{Utz's Theorem}\label{sec:utz}

In this section, in Theorem \ref{teo:exp+}, we extend to the context of lattice automorphisms the following result known as \emph{Utz's Theorem}: if a compact metric space supports a positively expansive homeomorphism then it is finite. We refer to \cites{CK,RW} for a proof of this assertion and the definition of positively expansive homeomorphism.

\begin{df}\label{def:exp+}
 A lattice automorphism $\lambda\colon\l\to\l$ is said to be \emph{positively expansive} iff there exists a cover $\u\in\c(\l)$ such that
\begin{center}
 for every $\v\in\c(\l)$ there exists $N\in\N$ such that $\bigwedge_{n=0}^ N\lambda^n\u\prec\v$.
 \end{center}
 Any such cover is called \emph{positive expansivity cover} for $\lambda$.
\end{df}

It can be shown that if $f$ is a homeomorphism of a compact metric space $X$, then $f$ is positively expansive in the usual metric sense iff the induced lattice automorphism $\lambda_f\colon\l_X\to\l_X$ is positively expansive according to Definition \ref{def:exp+}.  

\esp The following result corresponds to \cite{AAM}*{Lemma 3.19}.

\begin{lem}\label{lem:exp+}
If $\lambda\colon\l\to\l$ is a positively expansive lattice automorphism then there is $\u_0\in\c(\l)$ such that if $\v\in\c(\l)$ we have $\lambda^n\u_0\prec\v$ for some $n\in\N$. 
\end{lem}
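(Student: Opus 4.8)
The plan is to mimic the classical argument (see \cite{AAM}*{Lemma 3.19}): start from a positive expansivity cover $\u$ and push it forward by $\lambda$ to ``absorb'' the two-sided part of the refinement into a single positive power. The key observation is that since $\lambda$ is an automorphism, $\lambda$ commutes with $\wedge$ and with taking covers, so $\tbigwedge_{n=0}^N\lambda^n\u$ and $\tbigwedge_{k=-M}^{0}\lambda^k\u=\lambda^{-M}\bigl(\tbigwedge_{n=0}^M\lambda^n\u\bigr)$ differ only by a shift. Concretely, I would first show that $\u$ being a positive expansivity cover forces, for every $\v\in\c(\l)$, the existence of $N$ with $\tbigwedge_{n=0}^N\lambda^n\u\prec\v$, and then symmetrize: applying $\lambda^{-N}$ and using that $\lambda^{-N}$ is monotone, $\lambda^{-N}\bigl(\tbigwedge_{n=0}^N\lambda^n\u\bigr)=\tbigwedge_{k=-N}^{0}\lambda^k\u\prec\lambda^{-N}\v$.

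The heart of the matter is to produce a single cover $\u_0$ that works simultaneously for all $\v$. The natural candidate is $\u_0=\u$ itself, or possibly a subcover; the claim $\lambda^n\u_0\prec\v$ for some $n$ looks much stronger than $\tbigwedge_{n=0}^N\lambda^n\u\prec\v$ because it asserts that a single translate, not a common refinement of many, already refines $\v$. So I expect $\u_0=\u$ will not literally work and one must instead use the positive expansivity applied to a cleverly chosen $\v$. The idea: given an arbitrary cover $\v$, first use positive expansivity to get $N$ with $\tbigwedge_{n=0}^N\lambda^n\u\prec\v$; I want to find, independently of $\v$, a fixed cover $\u_0$ and an $n$ with $\lambda^n\u_0\prec\tbigwedge_{n=0}^N\lambda^n\u$. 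The trick is to take $\u_0=\tbigwedge_{k=0}^{M}\lambda^k\u$ for a large fixed $M$ obtained by applying positive expansivity to a specific cover, and exploit that $\lambda^n\u_0=\tbigwedge_{k=n}^{n+M}\lambda^k\u$; choosing $n$ appropriately (roughly $n$ making the window $[n,n+M]$ contain $[0,N]$ after shifting by $\lambda^{-n}$, wait — here one instead shifts the whole inequality), one gets the desired refinement. The precise bookkeeping of which window is needed, and whether a single $M$ suffices for all $\v$ (it does, because the positive expansivity cover $\u$ already controls everything through $\tbigwedge_{n=0}^N\lambda^n\u\prec\v$ with $N$ depending on $\v$, and $\lambda^n\u_0$ can be made to refine $\tbigwedge_{n=0}^N\lambda^n\u$ once $M\ge N$ after an appropriate shift), is the main obstacle.

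More carefully: set $\u_0=\u$ and suppose $\v$ is given. Apply positive expansivity with the target cover $\v$ to obtain $N$ with $\tbigwedge_{n=0}^{N}\lambda^n\u\prec\v$. Since $\u\prec\u$ trivially but $\lambda^N\u$ need not refine $\tbigwedge_{n=0}^N\lambda^n\u$, the single-translate statement must come from a different maneuver: apply $\lambda^{-N}$ to the refinement $\tbigwedge_{n=0}^{N}\lambda^n\u\prec\v$ to get $\tbigwedge_{k=-N}^{0}\lambda^k\u\prec\lambda^{-N}\v$, and then also note $\tbigwedge_{n=0}^{N}\lambda^n\u\prec\v$; intersecting, $\tbigwedge_{k=-N}^{N}\lambda^k\u$ refines both. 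Now here is the point where one should iterate positive expansivity: the positive expansivity cover, applied to the cover $\tbigwedge_{k=-N}^{N}\lambda^k\u$, yields $N'$ with $\tbigwedge_{n=0}^{N'}\lambda^n\u\prec\tbigwedge_{k=-N}^{N}\lambda^k\u$, hence $\lambda^{N}\tbigwedge_{n=0}^{N'}\lambda^n\u=\tbigwedge_{n=N}^{N+N'}\lambda^n\u$ — and this does not immediately collapse either. The genuinely clean route, which I would write up, is: take $\u_0$ to be any positive expansivity cover; given $\v$, get $N$ with $\tbigwedge_{n=0}^N\lambda^n\u_0\prec\v$; then observe that applying $\lambda^{N}$ to the chain of inclusions coming from a second application of positive expansivity (to the cover $\u_0$ itself, yielding the \emph{same} $\u_0$ being iteratively refined) forces $\lambda^N\u_0\prec\tbigwedge_{n=0}^{N}\lambda^n\u_0\prec\v$ — because $\lambda^N\u_0\prec\lambda^N\u_0$ is trivial and one shows by induction on $N$ that $\lambda^N\u_0\prec\tbigwedge_{n=0}^N\lambda^n\u_0$ is automatic since each $\lambda^n\u_0$ with $n\le N$ is coarser than the full wedge — no, the wedge is the finest. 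I will resolve this by taking $\u_0=\tbigwedge_{n=0}^{\text{(fixed)}}\lambda^n\u$ and checking $\lambda^n\u_0\prec\v$ reduces, after shifting, to the defining property of $\u$; the one nontrivial verification is that the fixed number of terms in $\u_0$ can be chosen uniformly, which follows because for the \emph{specific} auxiliary cover used, positive expansivity gives one $N$ once and for all.
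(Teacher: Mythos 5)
There is a genuine gap: you circle the correct candidate (take $\u_0$ to be a finite forward wedge $\tbigwedge_{n=0}^{N}\lambda^n\u$ of a positive expansivity cover $\u$), but you never supply the mechanism that makes a \emph{single} translate $\lambda^m\u_0$ refine a given $\v$, and the step you lean on is false as stated. Your parenthetical claim that ``$\lambda^n\u_0$ can be made to refine $\tbigwedge_{k=0}^{N}\lambda^k\u$ once $M\ge N$ after an appropriate shift'' does not work: $\lambda^n\u_0=\tbigwedge_{k=n}^{n+M}\lambda^k\u$ is a wedge over the window $[n,n+M]$, which for $n\geq1$ omits the indices $0,\dots,n-1$, so it does not refine $\tbigwedge_{k=0}^{N}\lambda^k\u$ on general grounds; and shifting by $\lambda^{-n}$ moves the target cover as well (you get refinement of $\lambda^{-n}\v$, not of $\v$), so the argument is circular. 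You even notice the problem yourself (``no, the wedge is the finest''), and your final sentence only gestures at ``a specific auxiliary cover'' giving a uniform $N$, without saying which cover or why that closes the gap.

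The missing idea, which is the heart of the paper's proof, is to apply positive expansivity to the particular cover $\lambda^{-1}\u$: this gives $N$ with $\tbigwedge_{n=0}^{N}\lambda^n\u\prec\lambda^{-1}\u$, hence, applying $\lambda$, the absorption relation $\tbigwedge_{n=1}^{N+1}\lambda^n\u\prec\u$, i.e.\ $\tbigwedge_{n=1}^{N+1}\lambda^n\u\sim\tbigwedge_{n=0}^{N+1}\lambda^n\u$. An induction on $m$ (repeatedly applying $\lambda$ and this equivalence) then yields $\tbigwedge_{n=m}^{N+m}\lambda^n\u\sim\tbigwedge_{n=0}^{N+m}\lambda^n\u$ for all $m\in\N$, which is exactly the statement that the shifted window is as fine as the full forward wedge. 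Setting $\u_0=\tbigwedge_{n=0}^{N}\lambda^n\u$, this reads $\lambda^m\u_0\sim\tbigwedge_{n=0}^{N+m}\lambda^n\u$; now, given $\v$, positive expansivity provides $M$ with $\tbigwedge_{n=0}^{M}\lambda^n\u\prec\v$, and choosing $m$ with $N+m\geq M$ gives $\lambda^m\u_0\prec\v$. Without the absorption relation (and the induction propagating it), no choice of ``fixed $M$'' or shift bookkeeping will produce the single-translate refinement.
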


\begin{proof}
Let $\u$ be a positive expansivity cover for $\lambda$. By Definition \ref{def:exp+} there exists $N\in\N$ such that $\bigwedge_{n=0}^N\lambda^n\u\prec\lambda^{-1}\u$. Applying $\lambda$ to this relation we get $\bigwedge_{n=1}^{N+1}\lambda^n\u\prec\u$, and hence $\bigwedge_{n=1}^{N+1}\lambda^n\u\sim\bigwedge_{n=0}^{N+1}\lambda^n\u$. This proves that we have $\bigwedge_{n=m}^{N+m}\lambda^n\u\sim\bigwedge_{n=0}^{N+m}\lambda^n\u$ for $m=1$. We claim that this is true for every $m\in\N$. Proceeding by induction suppose that $m\geq2$ and that the latter equivalence is true when $m$ is replaced by $m-1$. Then using the case $m=1$ and the induction hypothesis we have
\begin{equation*}
\begin{split}
\tbigwedge_{n=0}^{N+m}\lambda^n\u
&=\bigl(\tbigwedge_{n=0}^{N+1}\lambda^n\u\bigr)\wedge\bigl(\tbigwedge_{n=N+2}^{N+m}\lambda^n\u\bigr)
\sim\bigl(\tbigwedge_{n=1}^{N+1}\lambda^n\u\bigr)\wedge\bigl(\tbigwedge_{n=N+2}^{N+m}\lambda^n\u\bigr)\\
&=\lambda\big(\tbigwedge_{n=0}^{N+m-1}\lambda^n\u\bigr)
\sim\lambda\bigl(\tbigwedge_{n=m-1}^{N+m-1}\lambda^n\u\bigr)
=\tbigwedge_{n=m}^{N+m}\lambda^n\u,
\end{split}
\end{equation*}
proving the claim. Now, taking $\u_0=\bigwedge_{n=0}^N\lambda^n\u$ we can restate what we proved as $\lambda^m\u_0\sim\tbigwedge_{n=0}^{N+m}\lambda^n\u$ for all $m\in\N$. This relation together with the fact that $\u$ is a positive expansivity cover for $\lambda$ implies that $\u_0$ has the desired property.
\end{proof}

An element $u$ of a lattice $\l$ is called a \emph{proper maximal} element iff $u\neq1$ and $u\leq v$ implies $v=u$ or $v=1$ for all $v\in\l$.

\begin{lem}\label{lem:maximales}
 Let $u_0,\ldots,u_m$ different proper maximal elements of a lattice $\l$ and consider $v_0,\ldots,v_m\in\l$ given by $v_k=\tbigsqcap_{i=0,i\neq k}^mu_i$ for $k=0,\ldots,m$. Then $\v=\{v_0,\ldots,v_m\}$ is a cover with $m+1$ members that has no proper subcover. 
\end{lem}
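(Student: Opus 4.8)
The plan is to reduce the entire statement to the single identity
\[
\bigsqcup_{k\in S}v_k=\bigsqcap_{i\notin S}u_i
\qquad\text{for every nonempty }S\subseteq\{0,\dots,m\},
\]
where $i$ ranges over $\{0,\dots,m\}$ and the empty meet is $1$. Granting this, $S=\{0,\dots,m\}$ gives $\bigsqcup\v=1$, so $\v$, being finite, is a cover; and $S=\{0,\dots,m\}\setminus\{j\}$ gives $\bigsqcup_{k\neq j}v_k=u_j\neq1$ (as $u_j$ is proper). From the latter I get both remaining claims at once. First, the $v_k$ are pairwise distinct, so $|\v|=m+1$: if $v_j=v_l$ with $l\neq j$, then $\{v_k:k\neq j\}$ would be all of $\v$ and hence join to $1\neq u_j$. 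Second, $\v$ has no proper subcover: a subcover $\v'\subsetneq\v$ must omit some $v_j$, whence $\v'\subseteq\{v_k:k\neq j\}$ and $\bigsqcup\v'\leq u_j\neq1$, contradicting that $\v'$ is a cover.

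Two easy ingredients feed the identity. One is that distinct proper maximal elements $u,u'$ satisfy $u\sqcup u'=1$: from $u\leq u\sqcup u'$ and the maximality of $u$ we get $u\sqcup u'\in\{u,1\}$, and $u\sqcup u'=u$ would force $u'\leq u$, hence $u\in\{u',1\}$ by the maximality of $u'$, which is impossible. The other is the finite distributive law $w\sqcup\bigsqcap_i a_i=\bigsqcap_i(w\sqcup a_i)$, obtained by iterating the defining distributive identity.

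I would prove the identity by induction on $|S|$, the base case $|S|=1$ being the definition of the $v_k$. For the step write $S'=S\cup\{j\}$ with $j\notin S$ and set $A=\bigsqcap_{i\notin S'}u_i$. Splitting $\{i:i\neq j\}$ as the disjoint union $S\sqcup(\{0,\dots,m\}\setminus S')$ yields $v_j=\bigl(\bigsqcap_{i\in S}u_i\bigr)\sqcap A$, and splitting $\{i:i\notin S\}$ as $\{j\}\sqcup(\{0,\dots,m\}\setminus S')$ yields $\bigsqcap_{i\notin S}u_i=u_j\sqcap A$. Hence, using the induction hypothesis and then distributivity twice,
\[
\bigsqcup_{k\in S'}v_k=v_j\sqcup\bigsqcap_{i\notin S}u_i=A\sqcap\Bigl(\bigl(\bigsqcap_{i\in S}u_i\bigr)\sqcup u_j\Bigr)=A\sqcap\bigsqcap_{i\in S}(u_i\sqcup u_j)=A,
\]
since $u_i\sqcup u_j=1$ for each $i\in S$ (note $S\neq\varnothing$) by the first ingredient; thus $\bigsqcup_{k\in S'}v_k=A=\bigsqcap_{i\notin S'}u_i$, completing the induction.

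The only point requiring care is the bookkeeping of the complementary index sets in the inductive step and keeping straight which direction of the distributive law is being applied; beyond that there is no real obstacle once the displayed identity is isolated as the target.
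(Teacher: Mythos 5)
Your proof is correct, and it is organized differently from the paper's. You fix the family $u_0,\dots,u_m$ once and for all and prove, by induction on $|S|$, the closed-form identity $\bigsqcup_{k\in S}v_k=\bigsqcap_{i\notin S}u_i$ for every nonempty $S$, feeding it with the observation that distinct proper maximal elements join to $1$ (used at every inductive step) together with distributivity; all three claims then drop out by specializing $S$. The paper instead inducts on the number of maximal elements: in the step it computes $\bigsqcup_{k=1}^m v_k=u_0\sqcap\bigsqcup_{k=1}^m\bigsqcap_{i=1,i\neq k}^m u_i=u_0$ by pulling $u_0$ out with distributivity and invoking only the ``cover'' half of the inductive hypothesis for the subfamily $u_1,\dots,u_m$; maximality enters just once at the end, to upgrade $\bigsqcup_k v_k\geq u_0$ and $\bigsqcup_k v_k\geq u_1$ to $\bigsqcup_k v_k=1$. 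So the paper changes the family and keeps the statement, while you keep the family and strengthen the statement; your version buys the full formula for every partial join and makes the distinctness of the $v_k$ (hence $|\v|=m+1$) explicit, a point the paper passes over silently, whereas the paper's induction needs less (no identity for intermediate $S$, and only one appeal to maximality). One pedantic remark: your derivation of ``no proper subcover'' and of $|\v|=m+1$ uses $S=\{0,\dots,m\}\setminus\{j\}$, which is empty when $m=0$; there the claims are trivial anyway since $\v=\{1\}$ and $\bigsqcup\varnothing=0\neq1$ (this degenerate case is exactly the paper's base case), so it is worth a one-line disclaimer but is not a gap.
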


\begin{proof}
 The proof proceed by induction on the cardinality $n=|\{u_0,\ldots,u_m\}|=m+1$. If $n=1$, that is $m=0$, then $v_0=\tbigsqcap\varnothing=1$, and clearly $\v=\{1\}$ has no proper subcover. Now suppose that the property is true for a certain cardinality $n-1=m\geq1$ and consider $u_0,\ldots,u_m$ different proper maximal elements of $\l$. We have
 \begin{equation*}
 \begin{split}
 \tbigsqcup_{k=0}^mv_k
 &\geq\tbigsqcup_{k=1}^mv_k
 =\tbigsqcup_{k=1}^m\tbigsqcap_{i=0,i\neq k}^mu_i
 =\tbigsqcup_{k=1}^mu_0\sqcap\tbigsqcap_{i=1,i\neq k}^mu_i\\
 &=u_0\sqcap\tbigsqcup_{k=1}^m\tbigsqcap_{i=1,i\neq k}^mu_i
 =u_0\sqcap1=u_0,
 \end{split}
 \end{equation*}
 where we applied the induction hypothesis to $u_1,\ldots,u_m$ to get $\tbigsqcup_{k=1}^m\tbigsqcap_{i=1,i\neq k}^mu_i=1$. This shows $\tbigsqcup_{k=0}^mv_k\geq u_0$. Similarly we have $\tbigsqcup_{k=0}^mv_k\geq u_1$, hence, as $u_0,u_1$ are different proper maximal elements, we obtain $\tbigsqcup_{k=0}^mv_k=1$, that is, $\v=\{v_0,\ldots,v_m\}$ is a cover. Finally, in the above calculation we showed that $\tbigsqcup_{k=1}^mv_k=u_0\neq1$, so that $v_0$ cannot be removed from $\v$ to get a cover. Analogously, no $v_k$ with $k=0,\ldots,m$ can be removed from $\v$, hence $\v$ has $m+1$ members and no proper subcover. 
\end{proof}

The following is the principal result of this section, inspired on \cite{AAM}*{Theorem 3.20}.

\begin{thm}\label{teo:exp+}
 Let $\l$ be a lattice that admits a positively expansive automorphism. Then there exists $N\in\N$ such that every cover has a subcover with at most $N$ members. In particular $\l$ has finitely many proper maximal elements. 
\end{thm}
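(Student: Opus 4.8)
The plan is to use Lemma~\ref{lem:exp+} to produce a cover $\u_0$ that eventually refines every cover, and then argue that $|\u_0|$ (or rather the size of a minimal subcover of $\u_0$) bounds the size of minimal subcovers of all covers. First I would let $\u_0\in\c(\l)$ be the cover given by Lemma~\ref{lem:exp+}, pass to a minimal subcover (one with no proper subcover) and call it again $\u_0$; set $N=|\u_0|$. Now take an arbitrary cover $\v\in\c(\l)$. By Lemma~\ref{lem:exp+} there is $n\in\N$ with $\lambda^n\u_0\prec\v$. Since $\lambda$ is an automorphism, $\lambda^n\u_0$ is a cover with at most $N$ members, and moreover it has no proper subcover (because $\lambda^n$ and $\lambda^{-n}$ are monotone bijections, so $\lambda^{-n}$ carries a proper subcover of $\lambda^n\u_0$ back to a proper subcover of $\u_0$, contradicting minimality). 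The refinement $\lambda^n\u_0\prec\v$ gives, for each $u\in\lambda^n\u_0$, an element $\sigma(u)\in\v$ with $u\le\sigma(u)$; then $\{\sigma(u):u\in\lambda^n\u_0\}$ is a subset of $\v$ whose join is $\ge\bigsqcup\lambda^n\u_0=1$, hence a subcover of $\v$ with at most $N$ members. This proves the first assertion.

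For the second assertion I would deduce it from the first. Suppose $u_0,\dots,u_m$ are $m+1$ distinct proper maximal elements of $\l$. Apply Lemma~\ref{lem:maximales} to get a cover $\v=\{v_0,\dots,v_m\}$ with $m+1$ members and no proper subcover. By the first part, $\v$ must have a subcover with at most $N$ members; since $\v$ has no proper subcover, this forces $m+1\le N$. Hence $\l$ has at most $N$ proper maximal elements.

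The main obstacle I anticipate is the bookkeeping around minimality of subcovers under the automorphism: one needs that $\lambda^n$ sends a cover with no proper subcover to a cover with no proper subcover, which is immediate once phrased via the inverse automorphism, but it is the only place where one genuinely uses that $\lambda$ is invertible rather than merely a unital morphism. A secondary subtlety is that the refinement map $\sigma$ above need not be injective, so ``at most $N$ members'' rather than ``exactly $N$'' is the correct claim; this is harmless. Everything else is a direct combination of Lemma~\ref{lem:exp+} and Lemma~\ref{lem:maximales}, so the argument is short once these lemmas are in hand.
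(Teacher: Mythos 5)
Your argument is correct and follows essentially the same route as the paper: take $\u_0$ from Lemma \ref{lem:exp+} with $N=|\u_0|$, note that any cover $\v$ is refined by some $\lambda^n\u_0$ and hence admits a subcover of size at most $N$ by choosing for each member of $\lambda^n\u_0$ a member of $\v$ above it, and then bound the number of proper maximal elements via Lemma \ref{lem:maximales}. The extra bookkeeping about minimal subcovers being preserved by $\lambda^n$ is harmless but unnecessary, since the cardinality bound $|\lambda^n\u_0|\leq N$ already suffices.
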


\begin{proof}
Let $\u_0$ be a cover as in Lemma \ref{lem:exp+} and $N=|\u_0|$. As any cover is refined by some iterate $\lambda^n\u_0$, $n\in\N$, we see that all covers have a subcover of cardinality less or equal to $|\lambda^n\u_0|=N$. For the second assertion, note that by Lemma \ref{lem:maximales} there can exist at most $N$ proper maximal elements. 
\end{proof}

Applying Theorem \ref{teo:exp+} to the lattice automorphism induced by a \emph{positively refinement expansive} homeomorphism (see \cite{AAM}*{Definition 3.17}) we obtain \cite{AAM}*{Theorem 3.20}, because in a topological space having the $T_1$ separation property the proper maximal open sets are the complements of the points. In particular, considering the compact metric space case, we see that Theorem \ref{teo:exp+} implies Utz's Theorem.

\section{Entropy}\label{sec:shift}

In this section we discuss the concept of \emph{entropy} for lattice morphisms. We recall the definition of \emph{non-Hausdorff shifts} and calculate their entropy in Proposition \ref{prop:nHshift-entropy}.

\esp The notion of \emph{topological entropy} for a continuous map $f\colon X\to X$ on a compact topological space $X$ is introduced in \cite{AKM}. In \S5 of that article the authors suggest that the definition of entropy can be formulated in an abstract setting, showing an example in the category of abelian groups. A first evident general scenario where we can state a definition of entropy is the context of lattices and their morphisms.

\begin{df}\label{def:entropía}
Let $\l$ be a lattice and $\lambda\colon\l\to\l$ a unital morphism. Given $\u\in\c(\l)$ we define the \emph{entropy of $\u$} as
$$
h(\u)=\log\bigl(\min\bigl\{|\v|:\v\in\c(\l),\v\subseteq\u\bigr\}\bigr),
$$
and the \emph{entropy of $\lambda$ relative to $\u$} as
$$
h(\lambda,\u)=\lim_{n\to\infty}{\textstyle\frac1n}h\bigl(\tbigwedge_{k=0}^{n-1}\lambda^k\u\bigr).
$$
The \emph{entropy of $\lambda$} is defined as
$$
h(\lambda)=\sup\bigl\{h(\lambda,\u):\u\in\c(\l)\bigr\},
$$
where we agree that the $\sup(\cdot)$ function can take values in $[0,+\infty]$.
\end{df}

Clearly much of the theory developed in \cite{AKM} remains valid, word by word, in the context of lattices. In particular, as in \cite{AKM}*{Property 8}, the sequence $a_n=h\bigl(\tbigwedge_{k=0}^{n-1}\lambda^k\u\bigr)$ is sub-additive, so that the limit in Definition \ref{def:entropía} exists. Also the following result form \cite{KR} holds.

\begin{prop}[\cite{KR}*{Theorem 2.6}]\label{prop:exp entropia}
If $\lambda\colon\l\to\l$ is an expansive lattice automorphism with expansivity cover $\u$ then $h(\lambda)=h(\lambda,\u)$. In particular $h(\lambda)$ is finite.
\end{prop}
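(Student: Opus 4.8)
The plan is to transplant to the lattice setting the classical argument (see \cite{AKM} and \cite{KR}*{Theorem 2.6}) showing that a generating cover already computes the entropy. Throughout write $s(\u)=\min\{|\v|:\v\in\c(\l),\ \v\subseteq\u\}$, so that $h(\u)=\log s(\u)$, and put $a_n=h\bigl(\tbigwedge_{k=0}^{n-1}\lambda^k\u\bigr)$; recall that $(a_n)$ is sub-additive with $a_n/n\to h(\lambda,\u)$. Two elementary observations about $h(\cdot)$ will be used repeatedly. First, if $\u\prec\u'$ then $h(\u')\le h(\u)$: given a minimal subcover $\u_0\subseteq\u$ and a choice $\phi(u)\in\u'$ with $u\le\phi(u)$ for each $u\in\u_0$, the set $\{\phi(u):u\in\u_0\}$ is a subcover of $\u'$ of cardinality $\le s(\u)$; since the operations $\lambda^k$ and $\wedge$ preserve $\prec$, this upgrades to $h(\lambda,\u')\le h(\lambda,\u)$ whenever $\u\prec\u'$, and in particular both $h$ and $h(\lambda,\cdot)$ are invariant under the equivalence $\sim$. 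Second, if $\mu$ is a lattice automorphism then $h(\mu\u)=h(\u)$, because $\mu$ induces a cardinality-preserving bijection between the subcovers of $\u$ and those of $\mu\u$.

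Next I would show that passing from $\u$ to a fixed two-sided window $\w=\tbigwedge_{|i|\le N}\lambda^i\u$ does not change the relative entropy. For each $n$, using associativity and commutativity of $\wedge$ together with $\u\wedge\u\sim\u$,
$$
\tbigwedge_{k=0}^{n-1}\lambda^k\w=\tbigwedge_{k=0}^{n-1}\tbigwedge_{i=-N}^{N}\lambda^{k+i}\u\ \sim\ \tbigwedge_{j=-N}^{\,n-1+N}\lambda^j\u=\lambda^{-N}\Bigl(\tbigwedge_{l=0}^{\,n-1+2N}\lambda^l\u\Bigr),
$$
so by the two observations $h\bigl(\tbigwedge_{k=0}^{n-1}\lambda^k\w\bigr)=a_{n+2N}$; dividing by $n$ and letting $n\to\infty$ gives $h(\lambda,\w)=h(\lambda,\u)$.

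The proposition then follows at once. Let $\v\in\c(\l)$ be arbitrary. Because $\u$ is an expansivity cover there is $N\in\N$ with $\w=\tbigwedge_{|i|\le N}\lambda^i\u\prec\v$; then the first observation gives $h(\lambda,\v)\le h(\lambda,\w)$, which equals $h(\lambda,\u)$ by the previous paragraph. Taking the supremum over $\v\in\c(\l)$ yields $h(\lambda)\le h(\lambda,\u)$, and the opposite inequality is built into the definition of $h(\lambda)$, so $h(\lambda)=h(\lambda,\u)$. Finiteness is then immediate: $a_n=\log s\bigl(\tbigwedge_{k=0}^{n-1}\lambda^k\u\bigr)\le\log\bigl|\tbigwedge_{k=0}^{n-1}\lambda^k\u\bigr|\le n\log|\u|$, so $h(\lambda)=h(\lambda,\u)\le\log|\u|<\infty$.

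I do not anticipate a genuine difficulty here: this is the standard generator argument, and the only steps requiring some care are checking that the minimal-subcover definition of $h(\u)$ (rather than $\log|\u|$) really obeys the monotonicity in the first observation, and verifying the $\sim$-identity for the window cover, where one must keep track of the repeated factors $\lambda^j\u$ occurring in $\tbigwedge_{k=0}^{n-1}\tbigwedge_{i=-N}^{N}\lambda^{k+i}\u$. The role of expansivity enters only through the fact that a two-sided window $\tbigwedge_{|i|\le N}\lambda^i\u$ can be made to refine any prescribed cover $\v$, which is exactly the hypothesis consumed by the first observation.
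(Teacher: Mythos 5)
Your argument is correct: the monotonicity of the minimal-subcover entropy under $\prec$, the invariance under automorphisms and under $\sim$, and the identification of $\tbigwedge_{k=0}^{n-1}\lambda^k\w$ with $\lambda^{-N}\bigl(\tbigwedge_{l=0}^{n+2N-1}\lambda^l\u\bigr)$ up to $\sim$ all check out in the lattice setting. The paper offers no written proof of this proposition --- it simply cites \cite{KR}*{Theorem 2.6} and remarks that the classical theory carries over word by word --- and your write-up is exactly that standard generator argument transplanted to lattices, so it matches the intended approach.
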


Next we recall the definition of the \emph{non-Hausdorff shifts} from \cite{AAM}*{\S4.1}.

\esp Let $\s$ a nonempty finite set whose elements we call {\em symbols}, and let $\leq$ be a partial order on $\s$. We say that $(\s,\leq)$ is {\em lower complete} iff every nonempty lower bounded subset of $\s$ has a (unique) greatest lower bound. For convenience we introduce a new symbol, say $0\notin\s$, and agree that $0\leq u$ for every $u\in\s\cup\{0\}$.

Given a lower complete set of symbols $\s$ we consider the \emph{order topology} on $\s$, that is, the topology that has the base $\b=\bigl\{(0,u]:u\in\s\cup\{0\}\bigr\}$, where we denote $(0,u]=\{v\in\s:v\leq u\}$ for $u\in\s\cup\{0\}$. Let $\Sigma$ the product space $\Sigma=\s^{\Z}$ endowed with the product topology and consider the shift homeomorphism given by
$$
\sigma\colon\Sigma\to\Sigma,\qquad\sigma\bigl((u_k)_{k\in\Z}\bigr)=(u_{k+1})_{k\in\Z}\quad \text{for}\quad(u_k)_{k\in\Z}\in\Sigma.
$$
Note that for the special case in which the relation $\leq$ is the identity of $\s$, the order topology of $\s$ becomes the discrete topology and $(\Sigma,\sigma)$ is the usual shift which is a compact metric expansive dynamical system. In this case we denote the shift as $\sigma_{met}$. In the general case $(\Sigma,\sigma)$ is a refinement expansive homeomorphism as showed in \cite{AAM}*{Theorem 4.2}.

\esp Our ext result generalizes \cite{AKM}*{Example 3} where the entropy of the metric shift is calculated obtaining $h(\sigma_{met})=\log|\s|$. 

\begin{prop}\label{prop:nHshift-entropy}
Let $\s$ a lower complete set of symbols and consider the system $(\Sigma,\sigma)$ defined above. Then $h(\sigma)=\log|\s_+|$, where $\s_+$ is the set of maximal elements of $\s$.
\end{prop}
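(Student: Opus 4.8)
The plan is to compute $h(\sigma)$ by producing a single cover on which the supremum is attained, in the spirit of Proposition~\ref{prop:exp entropia} (although $\sigma$ need not be expansive, the underlying combinatorics is the same). The natural candidate is the time-zero cover $\u=\{u^{(0)}:u\in\s\}$, where $u^{(0)}$ denotes the cylinder $\{(v_k)_{k\in\Z}\in\Sigma:v_0\leq u\}=\lambda_{\sigma}^0\pi_0^{-1}((0,u])$; equivalently $\u$ is the cover of $\Sigma$ by the basic open sets determined by the coordinate $0$. The key point, which I would establish first, is that the ``refinement geometry'' of $\s$ is governed entirely by its maximal elements: since $\s$ is finite, the sets $(0,u]$ with $u\in\s_+$ already cover $\s$, and any basic open set $(0,v]$ is contained in some $(0,u]$ with $u\in\s_+$. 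Hence $\{u^{(0)}:u\in\s_+\}$ is a subcover of $\u$ of cardinality $|\s_+|$, and I would check it has no proper subcover: if $u\in\s_+$ then the sequence constantly equal to $u$ lies in $u^{(0)}$ but, because distinct maximal elements have no common upper bound above them in $(0,\cdot]$ and $u\not\leq u'$ for $u'\neq u$ in $\s_+$, it lies in no other $(u')^{(0)}$. Therefore $h(\u)=\log|\s_+|$.

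Next I would compute $h(\sigma,\u)$. For $n\geq 1$ the cover $\tbigwedge_{k=0}^{n-1}\sigma^k\u$ — note $\lambda_\sigma=\sigma^{-1}$ on opens, but conjugating by $\sigma^{-(n-1)}$ shows this is isomorphic as a cover to the one built from coordinates $0,\dots,n-1$ — consists of the cylinders $[w_0,\dots,w_{n-1}]:=\{(v_k):v_j\leq w_j,\ 0\leq j\leq n-1\}$ for $(w_0,\dots,w_{n-1})\in\s^n$. By the same argument applied coordinatewise, a subcover is obtained exactly by taking all words in $\s_+^n$, and this subcover is minimal (the constantly-$(w_0,\dots,w_{n-1})$ sequence witnesses that no word in $\s_+^n$ can be dropped). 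Hence $h\bigl(\tbigwedge_{k=0}^{n-1}\sigma^k\u\bigr)=\log|\s_+|^n=n\log|\s_+|$, so $h(\sigma,\u)=\log|\s_+|$.

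It remains to prove $h(\sigma)\leq\log|\s_+|$, i.e.\ that $\u$ is ``entropy-maximizing''. This is where the only real work lies, and I would handle it exactly as in \cite{AKM}: given an arbitrary $\v\in\c(\l_\Sigma)$, each member of $\v$ is open, hence a union of basic cylinders, so by compactness of $\Sigma$ there is $m\in\N$ with $\tbigwedge_{k=-m}^{m}\sigma^k\u\prec\v$; consequently $h(\sigma,\v)\leq h\bigl(\sigma,\tbigwedge_{k=-m}^{m}\sigma^k\u\bigr)=h(\sigma,\u)$ using the standard facts that $h(\sigma,\cdot)$ is monotone under $\prec$ and invariant under replacing a cover by $\tbigwedge_{|k|\leq m}\sigma^k\u$ (these are the lattice analogues of \cite{AKM}*{Properties 6--9}, valid here since $\sigma$ is a homeomorphism). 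Taking the supremum over $\v$ gives $h(\sigma)\leq h(\sigma,\u)=\log|\s_+|$, and combined with $h(\sigma)\geq h(\sigma,\u)$ we conclude $h(\sigma)=\log|\s_+|$. The main obstacle is the compactness step furnishing a finite window $m$ refining an arbitrary open cover; this uses that $\Sigma=\s^{\Z}$ is compact (finite $\s$, Tychonoff) and that the cylinders over finitely many coordinates form a base, after which everything reduces to counting words in $\s_+$.
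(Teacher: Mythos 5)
Your overall strategy is the same as the paper's at its core: the same time-zero cover $\u$, the same identification of the minimal subcovers of $\tbigwedge_{k=0}^{n-1}\sigma^{\pm k}\u$ with words in $\s_+^n$ (and your minimality argument via the constant sequences is exactly the ``easy check'' the paper leaves implicit), giving $h(\sigma,\u)=\log|\s_+|$. Where you diverge is the inequality $h(\sigma)\leq h(\sigma,\u)$: the paper gets it in one stroke by citing \cite{AAM}*{Theorem 4.2} (that $\sigma$ is refinement expansive with expansivity cover $\u$) together with Proposition \ref{prop:exp entropia}, whereas you try to re-derive it directly. That is a legitimate route, but as written it has a real gap, and your parenthetical ``although $\sigma$ need not be expansive'' is in fact wrong: $\sigma$ \emph{is} expansive in the sense of Definition \ref{def:expnsive}, and that is precisely the fact your argument needs.

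The gap is in the step ``each member of $\v$ is a union of basic cylinders, so by compactness there is $m$ with $\tbigwedge_{|k|\leq m}\sigma^k\u\prec\v$.'' Compactness plus the fact that finite-window cylinders form a base only yields a finite family of basic cylinders, each inside some member of $\v$, covering $\Sigma$; it does not by itself imply that every member of $\tbigwedge_{|k|\leq m}\sigma^k\u$ sits inside one of them --- that refinement property is exactly the expansivity of $\sigma$ with cover $\u$, and it is special to this system (for a general compact space and finite open cover the analogous claim is false, otherwise every homeomorphism would be expansive). What saves you here is the order structure: a member $C(u_{-m},\dots,u_m)$ of $\tbigwedge_{|k|\leq m}\sigma^k\u$ contains the ``top'' point $x$ with $x_k=u_k$ on the window, and since both the members of $\u$ and the basic open sets $(0,w]$ are down-sets coordinatewise, whichever basic cylinder of the finite subcover contains $x$ must contain all of $C(u_{-m},\dots,u_m)$. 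Supplying this observation closes the gap, but it is the essential content of \cite{AAM}*{Theorem 4.2}, which the paper simply invokes; the remaining AKM-style manipulations you use ($\prec$-monotonicity of $h(\sigma,\cdot)$ and $h(\sigma,\tbigwedge_{|k|\leq m}\sigma^k\u)=h(\sigma,\u)$) do carry over to this setting and amount to reproving Proposition \ref{prop:exp entropia} in this special case.
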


\begin{proof}
By \cite{AAM}*{Theorem 4.2} and its proof $\sigma$ is a refinement expansive homeomorphism and the collection of $0$-cylinders $\u=\{C(u):u\in\s\}$, where $C(u)=\{(v_k)_{k\in\Z}\in\Sigma:v_0\leq u\}$, is an expansivity cover for $\sigma$. Then, by Proposition \ref{prop:exp entropia} we have $h(\sigma)=h(\sigma,\u)$. Given $n\in\N$ we have
$$
\tbigwedge_{k=0}^{n-1}\sigma^{-k}\u=\bigl\{C(u_0,\ldots,u_{n-1}):u_0,\ldots,u_{n-1}\in\s\bigr\},
$$
where $C(u_0,\ldots,u_{n-1})=\bigl\{(v_k)_{k\in\Z}\in\Sigma: v_k\leq u_k\text{ if }0\leq k\leq n-1\bigr\}$ for $u_0,\ldots,u_{n-1}\in\s$. It is easy to check that the cover
$$
\v=\bigl\{C(u_0,\ldots,u_{n-1}):u_0,\ldots,u_{n-1}\in\s_+\bigr\},
$$
is a subcover of $\tbigwedge_{k=0}^{n-1}\sigma^{-k}\u$ of minimal cardinality equal to $|\s_+|^n$. Then
 $$
 h(\sigma,\u)
 =\lim_{n\to\infty}\tfrac1nh\Bigl(\tbigwedge_{k=0}^{n-1}\sigma^{-k}\u\Bigr)
 =\lim_{n\to\infty}\tfrac1n\log\bigl(|\s_+|^n\bigr)
 =\log|\s_+|,
 $$
 as desired. 
\end{proof}

\begin{bibdiv}
\begin{biblist}

\bib{AAM}{article}{
author={M. Achigar},
author={A. Artigue},
author={I. Monteverde},
title = {Expansive homeomorphisms on non-Hausdorff spaces},
journal = {Topol. Appl.},
volume = {207},
year = {2016},
number = {},
pages = {109--122}}
 
\bib{AKM}{article}{
author={R. L. Adler},
author={A. G. Konheim},
author={M. H. McAndrew},
title={Topological entropy},
journal={Trans. Amer. Math. Soc.},
volume={114},
number={2},
year={1965},
pages={309--319}}

\bib{AH}{article}{
author = {A. Artigue, M. Haim},
title = {Expansivity on commutative rings},
journal = {arXiv:1812.06195 [math.AC]},
year = {2018}}

\bib{Br60}{article}{
author={B. F. Bryant},
title={On Expansive Homeomorphisms},
journal={Pacific J. Math},
volume={10},
year={1960},
pages={1163--1167}}

\bib{Br62}{article}{
author={B. F. Bryant},
title={Expansive Self-Homeomorphisms of a Compact Metric Space},
journal={Amer. Math. Monthly},
volume={69},
year={1962},
pages={386--391}}

\bib{SS81}{book}{
author={S. Burris},
author={H.P. Sankappanavar},
title={A Course in Universal Algebra}, 
publisher={Springer-Verlag},
year={1981}}

\bib{CK}{article}{
author={E. M. Coven},
author={M. Keane},
title={Every compact metric space that supports a positively expansive homeomorphism is finite},
year={2006},
volume={48},
pages={304--305},
journal={IMS Lecture Notes Monogr. Ser., Dynamics \& Stochastics}}

\bib{Fried}{article}{
author={D. Fried},
title={Finitely presented dynamical systems},
journal={Ergod. Th. Dynam. Sys.},
year={1987},
volume={7},
pages={489--507}}

\bib{KR}{article}{
author={H. Keynes},
author={J. Robertson},
title={Generators for topological entropy and expansiveness},
journal={Mathematical systems theory},
volume={3},
year={1969},
pages={51--59}}

\bib{Ma}{article}{
author={R. Ma\~n\'e},
title={Expansive homeomorphisms and topological dimension},
journal={Trans. Amer. Math. Soc.},
volume={252},
year={1979}, 
pages={313--319}}

\bib{Na}{book}{
author={J. Nagata},
title={Modern dimension theory}, 
publisher={North Holland},
year={1965}}

\bib{RW}{article}{
author={D. Richeson},
author={J. Wiseman},
title = {Positively expansive homeomorphisms of compact spaces},
journal = {Int. J. Math. Math. Sci.},
volume = {2004},
number = {54},
pages = {2907--2910},
year = {2004}}

\bib{Utz}{article}{
author={W. R. Utz},
title={Unstable homeomorphisms},
journal={Proc. Amer. Math. Soc.},
year={1950},
volume={1},
number={6},
pages={769--774}}

\end{biblist}
\end{bibdiv}

\vspace{20mm}

\noindent Mauricio Achigar,\\
{\tt machigar@unorte.edu.uy},\\
{\sc Departamento de Matemática y Estadística del Litoral},\\
{\sc Centro Universitario Regional Litoral Norte},\\
{\sc Universidad de la República.}\\
25 de Agosto 281, Salto (50000), Uruguay.

\end{document}